\renewcommand*{\intitlepunct}{\space}
\DeclareMathOperator{\re}{Re}
\DeclareMathOperator*{\res}{Res}
\DeclareMathOperator{\vol}{vol}
\newcommand{\naturals}{\mathbb{N}}
\newcommand{\integers}{\mathbb{Z}}
\newcommand{\rationals}{\mathbb{Q}}
\newcommand{\reals}{\mathbb{R}}
\newcommand{\complex}{\mathbb{C}}
\newcommand{\uph}{\mathbb{H}^{2}}
\newcommand{\uphs}{\mathbb{H}^{3}}
\newcommand{\hmodg}{\Gamma\backslash\uph}
\newcommand{\hmodgs}{\Gamma\backslash\uphs}
\newcommand{\tendsto}{\rightarrow}
\newcommand{\bigo}[2][]{\ensuremath{O_{#1}(#2)}}
\newcommand{\lilo}[2][]{o_{#1}(#2)}
\newcommand{\mellin}{\mathcal{M}}
\newcommand{\pslr}{\mathrm{PSL}_{2}(\mathbb{R})}
\newcommand{\pslz}{\mathrm{PSL}_{2}(\mathbb{Z})}
\newcommand{\slz}{\mathrm{SL}_{2}(\mathbb{Z})}
\newcommand{\pslc}{\mathrm{PSL}_{2}(\mathbb{C})}
\newcommand{\uphyp}{\mathbb{H}^{3}}
\newcommand{\ideala}{\mathfrak{a}}
\newcommand{\idealp}{\mathfrak{p}}
\newcommand{\ringo}{\mathcal{O}}
\newcommand{\ringok}{\mathcal{O}_{K}}
\newcommand{\dualo}{\mathcal{O}^{\ast}}
\newcommand{\units}{\abs{\ringo^{\times}}}
\newcommand{\gmodginf}{\Gamma_{\infty}\backslash\Gamma}
\newcommand{\dmu}{d\mu(p)}
\newcommand{\dxy}{\frac{dx_{1} dx_{2} dy}{y^{3}}}
\newcommand{\disc}{d_{K}}
\newcommand{\adisc}{\sqrt{\abs{\disc}}}
\newcommand{\modsum}{/\sim}
\newcommand{\volf}{\vol(F)}
\newcommand{\pslok}{\mathrm{PSL}_{2}(\ringo_{K})}
\newcommand{\inti}{\mathcal{I}}
\renewcommand{\d}[1]{\,d#1}
\newtheorem{thm}{Theorem}
\newtheorem{lemma}{Lemma}
\newtheorem{cor}{Corollary}
\theoremstyle{definition}
\newtheorem{remark}{Remark}
\DeclareDocumentCommand{\pd}{O{} O{} m}{\frac{\partial^{#1}#2}{\partial{#3}^{#1}}}
\let\abs\undefined
\DeclarePairedDelimiter{\abs}{\lvert}{\rvert}
\DeclarePairedDelimiterX{\inprod}[2]{\langle}{\rangle}{#1,#2}
\providecommand\given{}
\newcommand{\SetSymbol}{\nonscript\: :\nonscript\:\mathopen{}\allowbreak}
\DeclarePairedDelimiterX\Set[1]\{\}{%
    \renewcommand\given{\SetSymbol}
    #1
}
\title{Quantum Limits of Eisenstein series in \texorpdfstring{$\mathbb{H}^{3}$}{H3}}
\author{Niko Laaksonen}
\thanks{The author was supported by the 150th Anniversary Postdoctoral Mobility Grant from the London Mathematical Society.}
\address{Department of Mathematics, University College London, Gower Street, London WC1E 6BT, United Kingdom}
\address{Department of Mathematical Sciences, University of Copenhagen, Universitetspark 5, 2100 Copenhagen \O, Denmark}
\email{n.laaksonen@ucl.ac.uk}
\date{\today}
\subjclass[2010]{Primary 11F72; Secondary 35P25}
\keywords{quantum limits; Eisenstein series; scattering poles; Bianchi groups}
\begin{document}
\maketitle
\begin{center}
    \begin{minipage}{.7\textwidth}
        \begin{abstract}
            \footnotesize
            We study the quantum limits of Eisenstein series off the critical line
            for $\pslok\backslash\uphs$, where $K$ is an imaginary quadratic field
            of class number one. This generalises the results of Petridis, Raulf
            and Risager on $\pslz\backslash\uph$. We observe that the measures
            $\abs{E(p,\sigma_{t}+it)}^{2}d\mu(p)$ become equidistributed only
            if $\sigma_{t}\tendsto 1$ as $t\tendsto\infty$. We use these computations to study
            measures defined in terms of the scattering states, which are shown to converge
            to the absolutely continuous measure $E(p,3)d\mu(p)$ under
            the GRH.
        \end{abstract}
    \end{minipage}
\end{center}

\section{Introduction}
Suppose $M$ is a compact negatively curved Riemannian manifold (without boundary) with the unit tangent bundle $X=SM$, then the geodesic flow
on $X$ is ergodic~\cite{sarnak2011}. The problem is to study the quantised flow,
in terms of the eigenfunctions $\phi_{j}$ of $\Delta$ on $M$, in the large eigenvalue limit. \textcite{shnirelman1974}, \textcite{zelditch1987}
and \textcite{colin-de-verdiere1985} proved that there is a full density subsequence of the measures $\mu_{j}=\abs{\phi_{j}}^{2}\mu$ which converges
weakly to the uniform measue $\mu$ on $M$.
It is not known in general whether $\mu$ is the unique limit.
When $M=\hmodg$, and $\Gamma$ is arithmetic, more tools are available such as Hecke operators and explicit Fourier expansions of Eisenstein series.
\textcite{rudnick1994} conjectured that for compact $M$ of constant negative curvature the limit $\mu$ is unique. This is the Quantum Unique Ergodicity (QUE)
conjecture.
For $\Gamma$ of arithmetic type the distribution of the eigenstates is well-understood.
In 1995 Luo and Sarnak~\cite{luo1995} proved the conjecture for Eisenstein series for non-compact arithmetic $\Gamma$ and, in particular,
for $\Gamma=\pslz$. The precise result is that given Jordan measurable subsets $A$ and $B$ of $M$, then
\begin{equation}\label{eq:luosarnak}
    \lim_{t\tendsto\infty}\frac{\int_{A}\abs{E(z,\frac{1}{2}+it)}^{2}\d{\mu(z)}}{\int_{B}\abs{E(z,\frac{1}{2}+it)}^{2}\d{\mu(z)}}=\frac{\mu(A)}{\mu(B)},
\end{equation}
where $\mu(B)\neq0$.
They actually compute the limit explicitly
\[\int_{A}\abs{E(z,\tfrac{1}{2}+it)}^{2}\d{\mu(z)}\sim\frac{6}{\pi}\mu(A)\log t,\]
as $t\tendsto\infty$ (the actual erroneous constant in~\cite{luo1995} is $48/\pi$, but it is not significant for their purposes).
Jakobson~\cite{jakobson1994} extended~\eqref{eq:luosarnak} to the unit tangent bundle. The result of Luo and Sarnak was also generalised
to $\pslok\backslash\uphyp$ by Koyama~\cite{koyama2000},
where $\pslok$ is the ring of integers of an imaginary quadratic field of class number one, and to
$\pslok\backslash\mathbb{H}^{n}$ with $K$ a totally real field of
degree $n$ and narrow class number one by Truelsen~\cite{truelsen2011}. In particular,
the quantum limit in~\cite{truelsen2011} for $\mu_{m,t}=\abs{E(z,\frac{1}{2}+it,m)}^{2}\mu$ is
\[\mu_{t,m}\tendsto\frac{(2\pi)^{n}nR}{2d_{K}\zeta_{K}(2)}\log t,\]
where $E(z,s,m)$ are a family of Eisenstein series parametrised by $m\in\integers^{n-1}$, $\zeta_{K}$ is the Dedekind zeta function and $R$ and $d_{K}$ are
the regulator and discriminant of $K$, respectively.
The QUE for $\phi_{j}$ a Hecke--Maa\ss~eigenform
was proven by Lindenstrauss~\cite{lindenstrauss2006} in the compact case and Soundararajan~\cite{soundararajan2010} in the non-compact case, thus
completing the full QUE conjecture for all arithmetic surfaces. \textcite{holowinsky2010} study QUE in the holomorphic case.
They consider holomorphic, $L^{2}$-normalised Hecke cusp forms $f_{k}$ of weight $k$ for $\slz$. They prove that the measures $\abs{y^{k/2}f_{k}(z)}^{2}\mu$ converge
weakly to $\mu$ as $k\tendsto\infty$.
Another interesting direction for the QUE of Eisenstein series has recently been proved by~\textcite{young2013},
who proves equidistribution of Eisenstein series for $\Gamma=\pslz$ when they are restricted to ``thin sets'', e.g.~geodesics connecting 0 and $\infty$ (as opposed
to restricting to compact Jordan measurable subsets of $\hmodg$ as in~\cite{luo1995}).
For a general cofinite $\Gamma\subset\pslr$ it is not clear whether there are infinitely many cusp forms so that
the limit of $\abs{\phi_{j}}^{2}\mu$ might not be relevant~\cites{phillips1985}{luo2001}.
\textcite{petridis2013a} (see also~\cite{petridis2013b}) propose to study the scattering states of $\Delta$ instead of the cuspidal spectrum.
It is known that under small deformations of $\Gamma$, the cusp forms dissolve into scattering states as characterised by Fermi's Golden Rule~\cites{phillips1992}{petridis2013c}.
The scattering states are described as residues of Eisenstein series on the left half-plane ($\re s<1/2$) at the non-physical poles
of the scattering matrix. These poles are called resonances. Let $\rho_{n}$ be a sequence of poles of the scattering matrix. For $\pslz\backslash\uph$ this
corresponds to half a non-trivial zero of $\zeta$.
Petridis, Raulf and Risager define the measures
\[u_{\rho_{n}}(z)=(\res_{s=\rho_{n}}\varphi(s))^{-1}\res_{s=\rho_{n}}E(z,s).\]
The normalisation is chosen so that $u_{\rho_{n}}$ has simple asymptotics $y^{1-\rho_{n}}$ for its growth at infinity.
The result is that for compact Jordan measurable subset $A$ of $\hmodg$,
\[\int_{A}\abs{u_{\rho_{n}}(z)}^{2}d\mu(z)\tendsto \int_{A}E(z,2-\gamma_{\infty})d\mu(z),\]
where $\gamma_{\infty}$ is the limit of the real part of the Riemann zeros. Under the RH the limit is $E(z,3/2)d\mu(z)$. This is obtained by studying the quantum limits
of Eisenstein series off the critical line.

We generalise their result to three dimensions $\hmodgs$ for $\Gamma$ a Bianchi group of class number one.
Let $\rho_{n}$ be a sequence of poles of the scattering matrix $\varphi(s)$ of $E(p,s)$ on $\hmodgs$ and define
\[\upsilon_{\rho_{n}}(p)=(\res_{s=\rho_{n}}\varphi(s))^{-1}\res_{s=\rho_{n}}E(p,s).\]
From the explicit form of $\varphi$~\eqref{eq:explicitk} we know that $\rho_{n}$ is equal to a
non-trivial zero of $\zeta_{K}$.
Define $s(t)=\sigma_{t}+it$, where $\sigma_{t}>1$ is a sequence converging to $\sigma_{\infty}\geq 1$.
Also, let $\gamma_{n}$ be the sequence of real parts of the non-trivial zeros of $\zeta_{K}$ with $\lim\gamma_{n}=\gamma_{\infty}$.
We will prove the following theorems.
\begin{thm}\label{thm:que1}
    Let $A$ be a compact Jordan measurable subset of $\hmodgs$. Then
    \[\int_{A}\abs{\upsilon_{\rho_{n}}(p)}^{2}\dmu\tendsto\int_{A}E(p,4-2\gamma_{\infty})\dmu\]
    as $n\tendsto\infty$.
\end{thm}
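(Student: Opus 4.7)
The plan is to adapt the strategy of Petridis, Raulf and Risager to the Bianchi setting of $\hmodgs$. Since $\overline{E(p,s)}=E(p,\bar s)$ and $\varphi$ has a simple pole at $\rho_{n}$ (coming from a simple non-trivial zero of $\zeta_{K}$, which holds generically), I start from
\[\abs{\upsilon_{\rho_{n}}(p)}^{2}=\abs{\res_{s=\rho_{n}}\varphi(s)}^{-2}\res_{s_{1}=\rho_{n}}\res_{s_{2}=\bar\rho_{n}}E(p,s_{1})E(p,s_{2}).\]
Integrating against $1_{A}$, the task reduces to an asymptotic formula, meromorphic in $(s_{1},s_{2})$ near $(\rho_{n},\bar\rho_{n})$, for
\[I_{A}(s_{1},s_{2}):=\int_{A}E(p,s_{1})E(p,s_{2})\dmu,\]
followed by extraction of the double residue.

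The main calculation is a Rankin--Selberg type spectral expansion of the product $E(\cdot,s_{1})E(\cdot,s_{2})$, analogous to Luo--Sarnak~\cite{luo1995} and Koyama~\cite{koyama2000}. I would first replace $1_{A}$ by smooth compactly supported approximants from above and below, using the Jordan measurability of $A$ as in~\cite{luo1995}. The spectral decomposition on $\hmodgs$ then splits $I_{A}$ into residual, cuspidal, and continuous-spectrum parts. The key ingredient is the unfolded inner product of $E(\cdot,s_{1})E(\cdot,s_{2})$ against $E(\cdot,\bar w)$, computed on $\gmodginf\backslash\uphs$ using the Fourier expansion and absolute convergence for $\re s_{1}$ large, which evaluates to an explicit ratio of completed Dedekind $\zeta_{K}$-factors with shifts in $w,s_{1},s_{2}$.

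Taking the double residue at $(\rho_{n},\bar\rho_{n})$ of this spectral identity, the normalising factor $\abs{\res_{s=\rho_{n}}\varphi(s)}^{-2}$ cancels against the otherwise-vanishing factors $\zeta_{K}(\rho_{n})\zeta_{K}(\bar\rho_{n})$ supplied by the explicit form~\eqref{eq:explicitk} of $\varphi$. Shifting the $w$-contour and picking up the residue of $\zeta_{K}$ at $w=2$ produces the reflected Eisenstein series $E(p,4-(s_{1}+s_{2}))$, which at $(s_{1},s_{2})=(\rho_{n},\bar\rho_{n})$ gives $E(p,4-2\gamma_{n})$, integrated against $1_{A}$. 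Passing to the limit $n\to\infty$ uses continuity of $s\mapsto E(p,s)$ in a neighbourhood of $s=4-2\gamma_{\infty}$, a point in the region of absolute convergence since $\gamma_{\infty}\le 1$.

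The principal obstacle is a uniform-in-$n$ estimate of the cuspidal contribution, which amounts to bounding the spectral sum $\sum_{f}\abs{L(s_{1},f)L(s_{2},f)}/\norm{f}^{2}$ for $(s_{1},s_{2})$ near $(\rho_{n},\bar\rho_{n})$, where $f$ runs over a Hecke basis of Bianchi cusp forms for $\pslok$. These bounds are available via the Kuznetsov trace formula over imaginary quadratic fields together with the analytic continuation of symmetric-square $L$-functions; the arguments in~\cite{koyama2000} adapt with care. A subsidiary issue is the justification of the contour shift in $w$ through convexity estimates for $\zeta_{K}$ in vertical strips, which is standard.
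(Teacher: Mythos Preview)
Your approach is considerably more intricate than the paper's, which reduces Theorem~\ref{thm:que1} to a one-line consequence of Theorem~\ref{thm:que3}. The key observation you miss is the functional equation $E(p,s)=\varphi(s)E(p,2-s)$: since $E(p,2-s)$ is regular at $s=\rho_{n}$ (because $2-\rho_{n}$ lies strictly to the right of the critical line and away from the pole at $s=2$), one gets directly
\[\upsilon_{\rho_{n}}(p)=\bigl(\res_{s=\rho_{n}}\varphi(s)\bigr)^{-1}\res_{s=\rho_{n}}\varphi(s)E(p,2-s)=E(p,2-\rho_{n}),\]
so that $\abs{\upsilon_{\rho_{n}}}^{2}\dmu=\abs{E(p,2-\rho_{n})}^{2}\dmu$. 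Writing $2-\rho_{n}=(2-\gamma_{n})+it_{n}$ with $2-\gamma_{n}>1$, Theorem~\ref{thm:que3} applies with $\sigma_{\infty}=2-\gamma_{\infty}$ and immediately yields the limit $E(p,2\sigma_{\infty})=E(p,4-2\gamma_{\infty})$.

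Your route---working directly at the scattering poles in the left half-plane via a two-variable meromorphic $I_{A}(s_{1},s_{2})$ and double residues---would in effect reprove the content of Theorem~\ref{thm:que3}, but in a harder regime. The Rankin--Selberg and spectral computations you outline are exactly those carried out in Lemmas~\ref{lemma:disc} and~\ref{lemma:cont}; the functional equation simply transports the whole analysis to $\re s>1$, where $\zeta_{K}(s)$ does not vanish and no residue extraction in $(s_{1},s_{2})$ is required. Your version would additionally need a justification of the meromorphic continuation of $I_{A}(s_{1},s_{2})$ across the critical line and uniform control of the cuspidal sum near $(\rho_{n},\bar\rho_{n})$; these are not insurmountable, but they are entirely bypassed by the reflection trick. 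As a minor point, the cuspidal input actually used (Lemma~\ref{lemma:disc}) is the subconvexity bound $L(u_{j},\tfrac{1}{2}+it)\ll_{j}\abs{t}^{1-\delta}$ of Petridis--Sarnak, not analytic continuation of symmetric-square $L$-functions.
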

Notice that $4-2\gamma_{\infty}>2$ so that we are in the region of absolute convergence. Under the GRH the limit is $E(p,3)d\mu(p)$.
\begin{thm}\label{thm:que2}
    Assume $\sigma_{\infty}=1$ and $(\sigma_{t}-1)\log t\tendsto 0$. Let $A$ and $B$ be compact Jordan
    measurable subsets of $\hmodgs$. Then
    \[\frac{\mu_{s(t)}(A)}{\mu_{s(t)}(B)}\tendsto\frac{\mu(A)}{\mu(B)},\]
    as $t\tendsto\infty$. In fact, we have
    \begin{equation}\label{eq:quelimit}
        \mu_{s(t)}(A)\sim\mu(A)\frac{2(2\pi)^{2}}{\units\abs{\disc}\zeta_{K}(2)}\log t.
    \end{equation}
\end{thm}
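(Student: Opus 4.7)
\emph{Proof sketch.} The plan is to adapt Koyama's proof~\cite{koyama2000} from the critical line $s=1+it$ to the off-critical-line regime $s(t)=\sigma_{t}+it$ with $\sigma_{t}>1$, following the two-dimensional scheme of~\cite{petridis2013a}. By Jordan measurability of $A$ and $B$, for any $\varepsilon>0$ we sandwich $\mathbf{1}_{A}$ between smooth compactly supported $\psi_{\pm}\in C_{c}^{\infty}(\hmodgs)$ with $\int(\psi_{+}-\psi_{-})\dmu<\varepsilon$. This reduces the problem to establishing, for every smooth compactly supported $\psi$,
\[\int_{\hmodgs}\abs{E(p,s(t))}^{2}\psi(p)\dmu\sim\langle\psi,1\rangle\cdot\frac{2(2\pi)^{2}}{\units\abs{\disc}\zeta_{K}(2)}\log t,\]
from which the ratio statement for $A,B$ follows immediately.

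To compute the left-hand side I would split $\psi=\langle\psi,1\rangle/\volf+\sum_{j}\langle\psi,\phi_{j}\rangle\phi_{j}+(\text{continuous Eisenstein})$ and analyse each contribution separately. The main term comes from the constant-function part, which I would extract via the Maass--Selberg formula on a truncated fundamental domain $F_{T}$:
\[\int_{F_{T}}\abs{E(p,s)}^{2}\dmu=a_{\infty}\biggl[\frac{T^{2\sigma-2}-\abs{\varphi(s)}^{2}T^{2-2\sigma}}{2\sigma-2}+\frac{\varphi(s)T^{-2it}-\overline{\varphi(s)}T^{2it}}{2it}\biggr]+\bigo{1},\]
where $a_{\infty}=\adisc/\units$ is the area of the $\Gamma_{\infty}$-fundamental domain in the boundary $\complex$. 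The functional equation $\varphi(s)\varphi(2-s)=1$ gives $\abs{\varphi(1+it)}^{2}=1$, so the first fraction is a removable $0/0$ at $\sigma=1$; Taylor expansion in $u=\sigma-1$ yields a leading $2\log T+\bigo{1}$. Choosing $T\asymp t$ (so non-zero Fourier modes of $E(\cdot,s(t))$ become exponentially small beyond $T$) and using the hypothesis $(\sigma_{t}-1)\log t\tendsto 0$ to absorb Taylor remainders of order $u\log T$ produces the leading $2a_{\infty}\log t$; combining with Humbert's volume formula $\volf=\abs{\disc}^{3/2}\zeta_{K}(2)/(4\pi^{2})$ gives the ratio $2a_{\infty}/\volf=2(2\pi)^{2}/(\units\abs{\disc}\zeta_{K}(2))$, matching the theorem's constant.

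The cuspidal and Eisenstein spectral contributions must be shown to be $\lilo{\log t}$. Each triple product $\int\abs{E(p,s(t))}^{2}\phi_{j}\dmu$ factorises via Rankin--Selberg into a ratio involving $L(s(t),\phi_{j}\conv\overline{E(\cdot,s(t))})$ and $\zeta_{K}(2s(t)-1)$, up to gamma factors; convexity bounds together with the rapid decay of $\langle\psi,\phi_{j}\rangle$ (from smoothness of $\psi$) give the desired bound, and the continuous Eisenstein part is handled analogously via triple products of three Eisenstein series. The main technical obstacle is the delicate resolution of the removable singularity at $\sigma=1$ in the Maass--Selberg main term: expanding $\abs{\varphi(1+u+it)}^{2}=1+2u\re(\varphi'/\varphi)(1+it)+\bigo{u^{2}}$ against $T^{\pm2u}=1\pm2u\log T+\bigo{u^{2}\log^{2}T}$ reveals that $(\sigma_{t}-1)\log t\tendsto 0$ is the sharp condition under which spurious cross terms of order $(\sigma_{t}-1)\log t$ remain below the main $\log t$; without this hypothesis a different leading constant emerges and equidistribution fails, as anticipated in the introduction.
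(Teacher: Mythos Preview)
Your sketch has a genuine gap in the cuspidal contribution. You write that ``convexity bounds together with the rapid decay of $\langle\psi,\phi_{j}\rangle$'' suffice to make the cuspidal piece $o(\log t)$, but convexity is \emph{not} enough here. Unfolding the triple product $\int_{M}\abs{E(p,s(t))}^{2}u_{j}\dmu$ gives (up to harmless factors)
\[
\frac{1}{\abs{\zeta_{K}(s(t))}^{2}}\,L\bigl(u_{j},\tfrac12-it\bigr)\,L\bigl(u_{j},\sigma_{t}-\tfrac12\bigr)\cdot\frac{\prod\Gamma\bigl(\tfrac{\overline{s(t)}\pm(s(t)-1)\pm it_{j}}{2}\bigr)}{\abs{\Gamma(s(t))}^{2}}.
\]
Stirling makes the gamma quotient $\asymp\abs{t}^{1-2\sigma_{t}}$, while $\abs{\zeta_{K}(s(t))}^{-2}\ll\log^{4}\abs{t}$. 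The degree of $L(u_{j},s)$ over $\rationals$ is~$4$, so convexity only gives $L(u_{j},\tfrac12+it)\ll\abs{t}^{1+\epsilon}$, and the whole expression is $\ll_{j}\abs{t}^{2-2\sigma_{t}}\log^{C}\abs{t}$. Under your own hypothesis $(\sigma_{t}-1)\log t\to 0$ one has $\abs{t}^{2-2\sigma_{t}}=e^{-2(\sigma_{t}-1)\log t}\to 1$, so the bound degenerates to $\log^{C}\abs{t}$ with $C\geq 4$, which is \emph{not} $o(\log t)$. The paper closes this by invoking the subconvexity bound $L(u_{j},\tfrac12+it)\ll_{j}\abs{t}^{1-\delta}$ of Petridis--Sarnak, which yields the much stronger $J_{j}(t)\to 0$; some saving beyond convexity is genuinely required.

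There is also a structural difference worth flagging. You spectrally expand $\psi$ as constant plus cusp forms plus unitary Eisenstein series and then pair each piece with $\abs{E(\cdot,s(t))}^{2}$. But the constant-function integral $\int_{M}\abs{E(p,s(t))}^{2}\dmu$ diverges, and likewise $\int_{M}\abs{E(p,s(t))}^{2}E(p,1+i\tau)\dmu$ diverges, so these terms make sense only after regularisation; your Maa\ss--Selberg truncation at height $T$ is one such device, but then taking $T\asymp t$ introduces an extra $(1+\abs{\varphi(s(t))}^{2})\log T$ of the same order as the main term, spoiling the constant. The paper avoids all of this by decomposing the non-cuspidal part of $\psi$ via \emph{incomplete} Eisenstein series $E(p|h)$ rather than unitary ones: unfolding $\int_{M}E(p|h)\abs{E(p,s(t))}^{2}\dmu$ directly produces a convergent Mellin integral whose contour shift isolates the residue $\frac{1-\abs{\varphi(s(t))}^{2}}{2(\sigma_{t}-1)}$, and a double application of the mean value theorem together with $\tfrac{\varphi'}{\varphi}(\sigma\pm it)\sim -4\log t$ then yields the constant $\frac{2(2\pi)^{2}}{\units\abs{\disc}\zeta_{K}(2)}$. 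Your Taylor-expansion heuristic recovers the same quantity in spirit, but the incomplete-Eisenstein route sidesteps the divergence and the delicate choice of $T$ entirely.
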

Let $F$ be the fundamental domain of $\ringok$ as a lattice in $\reals^{2}$.
Since $\abs{F}=\sqrt{\abs{\disc}}/2$ and $\vol(\hmodgs)=\abs{\disc}^{3/2}\zeta_{K}(2)/(4\pi^{2})$, \cite[Proposition~2.1]{sarnak1983}, it is also possible to express the constant
in~\eqref{eq:quelimit} in terms of the volumes.
\begin{remark}\label{rem:mistake}
    The constant for the QUE of Eisenstein series \emph{on} the critical line in \textcite{koyama2000} is $2/\zeta_{K}(2)$.
    However, there is a small mistake in his computations on page~485, where the residue of the double pole of $\zeta_{K}^{2}(s/2)$
    goes missing. After fixing this (and taking into account the number of units of $\ringo$ which is normalised away in~\cite{koyama2000})
    his result agrees with our limit~\eqref{eq:quelimit} for $\sigma_{\infty}=1$.
\end{remark}
\begin{thm}\label{thm:que3}
    Assume $\sigma_{\infty}>1$. Let $A$ be a compact Jordan measurable subset of $\hmodgs$. Then
    \[\mu_{s(t)}(A)\tendsto\int_{A}E(p,2\sigma_{\infty})\dmu,\]
    as $t\tendsto\infty$.
\end{thm}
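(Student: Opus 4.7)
The plan is to apply the standard Luo-Sarnak / Koyama framework for quantum unique ergodicity of Eisenstein series, adapted to the off-critical regime. Since $A$ is Jordan measurable, one may approximate $\chi_{A}$ from above and below by incomplete Eisenstein series $F_{\psi}(p) = \sum_{\gamma \in \gmodginf}\psi(y(\gamma p))$ with $\psi \in C_{c}^{\infty}(\reals_{>0})$, reducing the claim to
\[
    \int_{\hmodgs} F_{\psi}(p) \abs{E(p, s(t))}^{2} \dmu \tendsto \int_{\hmodgs} F_{\psi}(p) E(p, 2\sigma_{\infty}) \dmu
\]
for every such $\psi$. Rankin-Selberg unfolding converts each side into a one-dimensional Mellin integral in $y$, pairing $\psi(y)$ against the zeroth Fourier coefficient (in the $x$-variables) of the respective integrand over a fundamental domain for $\Gamma_{\infty}$.

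The zeroth Fourier coefficient of $E(p, 2\sigma_{\infty})$ is $y^{2\sigma_{\infty}} + \varphi(2\sigma_{\infty}) y^{2-2\sigma_{\infty}}$. On the left, Parseval in $x$ gives $\abs{a_{0}(y, s(t))}^{2} + \sum_{\nu \neq 0}\abs{a_{\nu}(y, s(t))}^{2}$, where $a_{0}(y, s) = y^{s} + \varphi(s) y^{2-s}$ and the $a_{\nu}$ are given by Bessel functions $K_{s-1}$ and divisor-type sums over $\ringok$. Expanding $\abs{a_{0}}^{2}$ yields four pieces: $y^{2\sigma_{t}}$, which tends to the $y^{2\sigma_{\infty}}$ piece on the right; the oscillating cross-terms $\varphi(s(t)) y^{2-2it}$ and $\overline{\varphi(s(t))} y^{2+2it}$, which vanish after integration against $\psi(y)y^{-3}$ by Riemann-Lebesgue on the Mellin transform together with polynomial bounds on $\varphi(s(t))$ available for $\sigma_{\infty} > 1$; and a residual piece $\abs{\varphi(s(t))}^{2} y^{4-2\sigma_{t}}$ which will be handled alongside the non-constant contribution.

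For the non-constant part, a Rankin-Selberg / Zagier style computation writes $\int_{0}^{\infty}\psi(y)\sum_{\nu\neq 0}\abs{a_{\nu}(y, s(t))}^{2} y^{-3}\,dy$ as a Mellin-Barnes integral of the Mellin transform of $\psi$ against an explicit ratio of $\zeta_{K}$-values and gamma factors depending on $s(t)$ and $\overline{s(t)}$. Shifting the contour past the relevant poles and invoking Stirling for the gamma ratios, the residues combine with the $\abs{\varphi(s(t))}^{2} y^{4-2\sigma_{t}}$ piece from the constant term to produce exactly the $\varphi(2\sigma_{\infty}) y^{2-2\sigma_{\infty}}$ part of the right-hand side, while the shifted contour integral is negligible as $t\tendsto\infty$ by super-polynomial decay of the Mellin transform of $\psi$.

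The main obstacle will be the Mellin-Barnes analysis: one must identify precisely which poles are crossed during the contour shift and verify that their residues cancel and combine in the claimed way. Since $\sigma_{\infty} > 1$ lies strictly off the critical line, no $\log t$ enhancement appears (in contrast with Theorem~\ref{thm:que2}), the Stirling estimates become routine, and the finite limit $E(p, 2\sigma_{\infty})$ emerges cleanly from the leading residue.
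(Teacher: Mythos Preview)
Your outline of the incomplete-Eisenstein-series computation is essentially the paper's Lemma~\ref{lemma:cont}: unfold against $F_{\psi}$, separate the constant and non-constant Fourier contributions, and evaluate the latter by a contour shift in a Mellin--Barnes integral built from $\xi_{K}$-factors. One minor inaccuracy in your description: for $\sigma_{\infty}>1$ the term $\lvert\varphi(s(t))\rvert^{2}y^{4-2\sigma_{t}}$ from the constant part does not ``combine with residues'' to produce $\varphi(2\sigma_{\infty})y^{2-2\sigma_{\infty}}$; rather, convexity for $\zeta_{K}$ gives $\varphi(s(t))\to 0$, so this term and the residues at $s=4-2\sigma_{t}$, $2\sigma_{t}-2$ all vanish separately, while the single residue at $s=2\sigma_{t}$ alone furnishes the $\varphi(2\sigma_{\infty})$ piece. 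This does not affect the outcome.

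The genuine gap is at the very first step. You assert that $\chi_{A}$ can be sandwiched pointwise between incomplete Eisenstein series, but this is not correct: the $L^{2}$-closure of the span of incomplete Eisenstein series is exactly the orthogonal complement of the cuspidal subspace, and a generic compactly supported function on $\hmodgs$ has nontrivial cuspidal component. The standard Luo--Sarnak/Koyama approximation sandwiches $\chi_{A}$ between smooth compactly supported functions $g$, and then for each such $g$ one must also show that the cuspidal contribution
\[
\sum_{j}\langle g,u_{j}\rangle\int_{M}u_{j}(p)\,\lvert E(p,s(t))\rvert^{2}\,d\mu(p)
\]
tends to $0$. This is the paper's Lemma~\ref{lemma:disc}: unfolding expresses $\int_{M}u_{j}\lvert E(\cdot,s(t))\rvert^{2}\,d\mu$ as a ratio of gamma factors times $L(u_{j},\tfrac{1}{2}-it)L(u_{j},\sigma_{t}-\tfrac{1}{2})$, and one needs a subconvexity bound $L(u_{j},\tfrac{1}{2}+it)\ll_{j}\lvert t\rvert^{1-\delta}$ (Petridis--Sarnak) to force vanishing. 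Your proposal omits this entirely, and without it the argument is incomplete.
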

Theorem~\ref{thm:que1} says that the measures $\upsilon_{\rho}$ do \emph{not} become equidistributed. We could of course renormalise
the measures and use
\[d\nu_{s(t)}(p)=\abs*{\frac{E(p,s(t))}{\sqrt{E(p,2\sigma_{\infty})}}}^{2}\dmu.\]
Then we have the following corollary.
\begin{cor}\label{cor:que}
    Assume $\sigma_{\infty}>1$. Let $A$ be a compact Jordan measurable subset of $\hmodgs$. Then
    \[\nu_{s(t)}(A)\tendsto\mu(A),\]
    as $t\tendsto\infty$.
\end{cor}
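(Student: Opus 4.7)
The plan is to reduce Corollary~\ref{cor:que} to Theorem~\ref{thm:que3} by a standard ``continuous weight'' approximation argument. Since $2\sigma_{\infty}>2$ lies in the region of absolute convergence, the Eisenstein series $E(p,2\sigma_{\infty})$ is given by a positive convergent sum and is therefore a continuous, strictly positive function on $\hmodgs$. Restricted to the compact set $A$ it attains a positive minimum $m>0$ and a finite maximum $M<\infty$, and it is uniformly continuous there. These are the only analytic properties of the weight $E(p,2\sigma_{\infty})$ that we will use.

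First, given $\epsilon>0$, I would partition $A$ into finitely many compact Jordan measurable subsets $A_{1},\ldots,A_{N}$ on each of which the oscillation of $E(\cdot,2\sigma_{\infty})$ is less than $\epsilon$; on each $A_{j}$ choose a value $c_{j}\in[m,M]$ with $\abs{E(p,2\sigma_{\infty})-c_{j}}<\epsilon$ for $p\in A_{j}$. Then by comparing $1/E(p,2\sigma_{\infty})$ with $1/c_{j}$ on each piece we obtain the two-sided sandwich
\[
\sum_{j=1}^{N}\frac{1}{c_{j}+\epsilon}\int_{A_{j}}\abs{E(p,s(t))}^{2}\dmu\leq\nu_{s(t)}(A)\leq\sum_{j=1}^{N}\frac{1}{c_{j}-\epsilon}\int_{A_{j}}\abs{E(p,s(t))}^{2}\dmu,
\]
valid for $\epsilon<m/2$.

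Next, I would apply Theorem~\ref{thm:que3} to each Jordan measurable piece $A_{j}$ to conclude that, as $t\tendsto\infty$,
\[
\int_{A_{j}}\abs{E(p,s(t))}^{2}\dmu\longrightarrow\int_{A_{j}}E(p,2\sigma_{\infty})\dmu.
\]
Since $\abs{E(p,2\sigma_{\infty})-c_{j}}<\epsilon$ on $A_{j}$, the right-hand side lies in the interval $[(c_{j}-\epsilon)\mu(A_{j}),(c_{j}+\epsilon)\mu(A_{j})]$. Substituting back into the sandwich yields
\[
\frac{c_{j}-\epsilon}{c_{j}+\epsilon}\mu(A)+\lilo{1}\leq\nu_{s(t)}(A)\leq\frac{c_{j}+\epsilon}{c_{j}-\epsilon}\mu(A)+\lilo{1}
\]
after summing over $j$ and letting $t\tendsto\infty$. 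Since $c_{j}\geq m>0$, both ratios tend to $1$ as $\epsilon\tendsto 0$, so $\nu_{s(t)}(A)\tendsto\mu(A)$.

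I do not expect a serious obstacle: the one point to verify carefully is that $E(p,2\sigma_{\infty})$ is bounded away from zero on the compact set $A$, which follows from the term-by-term positivity of the Eisenstein series in its domain of absolute convergence. The remainder is a standard ``continuous weight'' argument that lets one upgrade weak-$\ast$ convergence of $\abs{E(p,s(t))}^{2}\dmu$ to convergence against the bounded continuous weight $1/E(p,2\sigma_{\infty})$.
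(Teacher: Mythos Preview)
Your approach is correct and is exactly the kind of argument one would use to pass from Theorem~\ref{thm:que3} to the corollary. The paper itself gives no proof of Corollary~\ref{cor:que}; it is simply stated as an immediate consequence of Theorem~\ref{thm:que3}, so there is nothing to compare against beyond noting that you have supplied the omitted details.

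One small notational slip: in your final displayed sandwich
\[
\frac{c_{j}-\epsilon}{c_{j}+\epsilon}\,\mu(A)+o(1)\leq\nu_{s(t)}(A)\leq\frac{c_{j}+\epsilon}{c_{j}-\epsilon}\,\mu(A)+o(1)
\]
the index $j$ has no meaning after you have summed over $j$. What you want is a uniform bound: since $m\leq c_{j}\leq M$ for all $j$, one has $\tfrac{c_{j}+\epsilon}{c_{j}-\epsilon}\leq 1+\tfrac{2\epsilon}{m-\epsilon}$ and similarly below, giving
\[
\Bigl(1-\tfrac{2\epsilon}{m+\epsilon}\Bigr)\mu(A)\leq\liminf_{t\to\infty}\nu_{s(t)}(A)\leq\limsup_{t\to\infty}\nu_{s(t)}(A)\leq\Bigl(1+\tfrac{2\epsilon}{m-\epsilon}\Bigr)\mu(A),
\]
from which the conclusion follows as $\epsilon\to 0$. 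This is clearly what you intended; just tidy the write-up.
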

The measures $\nu_{\rho_{n}}$ are not eigenfunctions of $\Delta$ so their equidistribution is not directly related to the QUE conjecture.
\begin{remark}
    \textcite{dyatlov2012} investigated quantum limits of Eisenstein series and scattering states for more general Riemannian manifolds with cuspidal ends.
    He proves results analogous to Theorems~\ref{thm:que1} and~\ref{thm:que3}.
    However, only the case of surfaces is explicitly written down and the limits are not identified as concretely for the arithmetic special cases such as
    in~\cite{petridis2013a} or Theorem~\ref{thm:que1} and~\ref{thm:que3}.
    Dyatlov uses a very different method of decomposing the Eisenstein series into plane waves and studying their microlocal limits, which does not use global
    properties of the surface, such as hyperbolicity.
\end{remark}

\section{Spectral Theory in \texorpdfstring{$\pslok\backslash\uphs$}{PSL(2,O)\textbackslash H}}
For the general spectral theory in hyperbolic three-space and the relevant facts about Bianchi groups and $\zeta_{K}$ we refer to~\cite{elstrodt1998}.
Fix a square-free integer $D<0$ and let
$K=\rationals(\sqrt{D})$ be the corresponding imaginary quadratic field of discriminant $\disc$.
Let $\ringo$ be the ring of integers of $K$ and let $\inprod{1}{\omega}$, where
\[\omega=\frac{\disc+\sqrt{\disc}}{2},\]
be a $\integers$-basis for $\ringo$.
Let $\Gamma=\mathrm{PSL}_{2}(\ringo)$.
For simplicity, restrict $D$ so that $K$ has class number one.
This means that $\Gamma$ has exactly one cusp (up to $\Gamma$-equivalence) which we may suppose is
$\infty\in\mathbb{P}^{1}\mathbb{C}$.
The Dedekind zeta function of $K$ is defined for $\re s>1$ by
\[\zeta_{K}(s)=\sideset{}{'}\sum_{\ideala\subset\ringo}\frac{1}{N\ideala^{s}}=\prod_{\idealp}\frac{1}{1-N\idealp^{-s}},\]
where the prime in the summation denotes that it is taken over
nonzero ideals $\ideala$, and the Euler product is taken over
prime ideals $\idealp\subset\ringo$. We define the completed zeta function by
\[\xi_{K}(s)=\left(\frac{\adisc}{2\pi}\right)^{s}\Gamma(s)\zeta_{K}(s).\]
Notice that this differs from the standard way of completing $\zeta_{K}$ due to the inclusion of the discriminant.
We know that $\xi_{K}$ satisfies a functional equation
\begin{equation}\label{eq:funceqk}
    \xi_{K}(s)=\xi_{K}(1-s),
\end{equation}
and has an analytic continuation to all of $\complex$ with a simple pole at $s=1$ with residue
\begin{equation}\label{eq:classnumber}
    \res_{s=1}\zeta_{K}(s)=\frac{2\pi}{\units},
\end{equation}
where $\units$ is the number of units of $\ringo$.

Let $\uphs=\Set{p=z+jy\given z\in\complex,\, y>0}$, where $z=x_{1}+ix_{2}$, be the three dimensional hyperbolic space.
The standard volume element on $\uphs$ is given by
\[\dmu=\frac{dx_{1}\,dx_{2}\,dy}{y^{3}},\]
and the hyperbolic Laplacian is
\[\Delta=y^{2}\left(\pd[2]{x_{1}}+\pd[2]{x_{2}}+\pd[2]{y}\right)-y\pd{y},\]
with the corresponding eigenvalue equation $\Delta f+\lambda f=0$. We write the eigenvalues of
$\Delta$ as $\lambda_{j}=s_{j}(2-s_{j})=1+t_{j}^{2}$. We know that for cofinite subgroups $\Gamma$ of $\pslc$ the Laplacian
has both discrete and continuous spectrum. The continuous spectrum spans (in the $\lambda$ aspect) the interval $[1,\infty)$ with
the eigenpacket given by Eisenstein series on the critical line, $E(p, 1+it)$. The discrete spectrum consists of Maa\ss~cusp forms and the eigenvalue $\lambda=0$.

The Eisenstein series for $\Gamma$ at the cusp at $\infty$ is given for $\re s>1$ by
\[E(p,s) = \sum_{\gamma\in\Gamma_{\infty}\backslash\Gamma}y(\gamma p)^{s},\]
where $\Gamma_{\infty}$ is the stabiliser of $\Gamma$ at $\infty$.
The Eisenstein series is an eigenfunction of $\Delta$ with the eigenvalue $\lambda=s(2-s)$, but it is not square integrable.
Let
\begin{equation}\label{eq:explicitk}
    \varphi(s) = \frac{\xi_{K}(s-1)}{\xi_{K}(s)}=\frac{2\pi}{\sqrt{\abs{\disc}}}\frac{1}{s-1}\frac{\zeta_{K}(s-1)}{\zeta_{K}(s)}
\end{equation}
be the scattering matrix of $E(p,s)$. The Fourier expansion of $E(p,s)$ at the cusp is then given by
\begin{equation}\label{eq:efourier3}
    E(p,s) = y^{s}+\varphi(s)y^{2-s}+\frac{2y}{\xi_{K}(s)}\sum_{0\neq
        n\in\ringo}\abs{n}^{s-1}\sigma_{1-s}(n)K_{s-1}\left(\frac{4\pi\abs{n}y}{\sqrt{\abs{\disc}}}\right)e^{2\pi
        i\inprod{\frac{2\overline{n}}{\sqrt{\disc}}}{z}},
\end{equation}
where
\[\sigma_{s}(n)=\sum_{\substack{(d)\subset\ringo\\ d|n}}\abs{d}^{2s}.\]
This form of the Fourier expansion can be found in~\cites{raulf2006,koyama2000}.
It also appears in a more general form in~\cites[(13)]{asai1970}[\S6~Theorem~2.11.]{elstrodt1998}.
The Eisenstein series $E(p,s)$ can be analytically continued to all of $\complex$ as a meromorphic function
of $s$. We can see from~\eqref{eq:explicitk} that to the right of the critical line $s=1$, $E(p,s)$ has only a simple pole at $s=2$ with residue
\[\res_{s=2}E(v,s)=\frac{\abs{\mathcal{F}_{\infty}}}{\vol(M)},\]
where $\mathcal{F}_{\infty}$ is the fundamental domain of $\Gamma_{\infty}$ acting on the boundary $\complex$, \cite[\S6~Theorem~1.11]{elstrodt1998}.
Moreover, since $\varphi(s)\varphi(2-s)=1$,
the Eisenstein series has a functional equation~\cite[\S6~Theorem~1.2]{elstrodt1998}
\begin{equation}\label{eq:efunceq3}
    E(p,s)=\varphi(s)E(p,2-s).
\end{equation}
We will also use the incomplete Eisenstein series which are defined for a smooth $\psi(x)$ with compact support on $\reals^{+}$ by
\[E(p|\psi)=\sum_{\gamma\in\Gamma_{\infty}\backslash\Gamma}\psi(y(\gamma p)).\]
As in two dimensions, it is possible to decompose $L^{2}(M)$ into the orthogonal spaces spanned by the closures of
the spaces of incomplete Eisenstein series on one hand, and the Maa\ss~cusp forms on the other hand.

Finally, for a Hecke--Maa\ss~cusp form $u_{j}$ we have the following Fourier expansion.
\begin{equation}\label{eq:cfourier3}
    u_{j}(p) = y\sum_{0\neq n\in\dualo_{K}}\rho_{j}(n)K_{it_{j}}(2\pi\abs{n}y)e^{2\pi i\inprod{n}{z}},
\end{equation}
where $\dualo_{K}$ is the dual lattice,
\[\dualo_{K}=\{ m : \inprod{m}{n}\in\integers\text{ for all }n\in\ringok\}.\]
Also, the Hecke eigenvalues satisfy $\rho_{j}(n) = \rho_{j}(1)\lambda_{j}(n)$, and in particular~\cite[Satz~16.8,~pg.~119]{heitkamp1992}
\begin{equation}\label{eq:lfact}
    L(u_{j},s)=\rho_{j}(1)\sum_{n\in\ringok}\frac{\lambda_{j}(n)}{N(n)^{s}}=\rho_{j}(1)\prod_{\idealp}(1-\lambda_{j}(\idealp)N\idealp^{-s}+N\idealp^{1-2s})^{-1}.
\end{equation}
We can split the space of Hecke--Maa\ss~cusp forms further into even and odd cusp forms depending on the sign in $\rho(-n)=\pm\rho(n)$.

\section{Proofs}
Let $M=\hmodgs$. Since any function in $L^{2}(M)$ can be decomposed in terms of the Hecke--Maa\ss~cusp forms $\Set{u_{j}}$
and the incomplete Eisenstein series $E(p|\psi)$, it is sufficient to consider them separately.

\subsection{Discrete Part}
We will first prove that the contribution of the discrete spectrum vanishes in the limit.
\begin{lemma}\label{lemma:disc}
    Let $u_{j}$ be a Hecke--Maa\ss~cusp form. Then
    \[\int_{M}u_{j}(p)\abs{E(p,s(t))}^{2}\dmu\tendsto0,\]
    as $t\tendsto\infty$.
\end{lemma}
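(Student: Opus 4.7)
My plan is to apply the Rankin--Selberg unfolding method, in the spirit of~\cite{luo1995, koyama2000, petridis2013a}. Using $\overline{E(p,s)} = E(p,\overline{s})$, write $\abs{E(p,s(t))}^{2} = E(p,s(t))\, E(p,\overline{s(t)})$. Working initially where $\re \overline{s(t)} > 2$ so that the second Eisenstein series converges absolutely, and then continuing analytically in $s$, I would unfold the second factor to reduce the integral to
\[
    \int_{\Gamma_{\infty}\backslash\uphs} u_{j}(p)\, E(p,s(t))\, y^{\overline{s(t)}} \dmu.
\]

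Next I would substitute the Fourier expansions~\eqref{eq:efourier3} and~\eqref{eq:cfourier3} and integrate in the horizontal variable $z$ over the fundamental domain of $\Gamma_{\infty}$ on $\complex$. Since $u_{j}$ has no zeroth coefficient, the constant terms $y^{s} + \varphi(s) y^{2-s}$ of $E(p,s)$ drop out, and orthogonality of additive characters leaves a sum over diagonal Fourier modes (after the reindexing $m = 2\bar n/\sqrt{\disc}$ that aligns $\ringok$ with $\dualo$). The remaining $y$-integral is evaluated by the classical Mellin identity
\[
    \int_{0}^{\infty} K_{\mu}(u) K_{\nu}(u) u^{w-1}\, du = \frac{2^{w-3}}{\Gamma(w)} \prod_{\epsilon_{1},\epsilon_{2} = \pm 1} \Gamma\!\left(\tfrac{w + \epsilon_{1}\mu + \epsilon_{2}\nu}{2}\right),
\]
with $\mu = it_{j}$, $\nu = s(t) - 1$ and $w = \overline{s(t)}$. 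The Dirichlet series in $n$ that is left factorises, via Hecke multiplicativity and~\eqref{eq:lfact}, into a product of two shifted values of $L(u_{j},\cdot)$, as is standard for the Rankin--Selberg convolution $u_{j}\otimes E$.

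What remains is a Stirling estimate. Two of the four Gamma factors in the numerator have bounded imaginary part and contribute $O_{j}(1)$; the other two are of the form $\Gamma(\tfrac{1}{2} + iT)$ with $T$ of order $\pm t$, and their product has size $\asymp e^{-\pi t}$ by $\abs{\Gamma(\tfrac{1}{2} + iT)}^{2} = \pi/\cosh(\pi T)$. On the other side,
\[
    \abs{\xi_{K}(s(t))\, \Gamma(\overline{s(t)})} \asymp t^{2\sigma_{t} - 1}\, e^{-\pi t}\, \abs{\zeta_{K}(s(t))},
\]
so the exponential factors cancel exactly and we are left with a polynomial factor of order $t^{-(2\sigma_{t} - 1)} \abs{\zeta_{K}(s(t))}^{-1}$ multiplied by the two $L$-values. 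The convexity bound for $L(u_{j},\cdot)$ (polynomial in $t$) together with the classical lower bound $\abs{\zeta_{K}(1 + it)}^{-1} \ll (\log t)^{O(1)}$, combined with $2\sigma_{t} - 1 \to 2\sigma_{\infty} - 1 \geq 1$, then gives $O_{j}(t^{-\delta})$ for some $\delta > 0$, which is enough.

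The main difficulty I anticipate is the bookkeeping in the middle step: tracking the constants involving $\adisc$, the reindexing between $\ringok$ and $\dualo$ in~\eqref{eq:efourier3}, and verifying that the four Gamma factors land in exactly the configuration (two tempered in $t$, two with imaginary parts $\asymp \pm t$) needed for the exponentials to cancel rather than add. Once the Rankin--Selberg identity is set up cleanly, convexity for $L(u_{j},\cdot)$ is comfortably sufficient, and the subconvex bounds for $GL_{2}$ should not be required.
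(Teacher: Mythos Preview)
Your overall strategy coincides with the paper's: Rankin--Selberg unfolding against one copy of $E(p,s)$, insertion of the Fourier expansions~\eqref{eq:efourier3} and~\eqref{eq:cfourier3}, evaluation of the $y$-integral by the Mellin--Barnes formula for a product of two $K$-Bessel functions, and factorisation of the resulting Dirichlet series into two shifted values of $L(u_{j},\cdot)$ divided by $\zeta_{K}$. Your Stirling bookkeeping is also correct: after the exponential cancellation the gamma quotient is $\asymp_{j}\abs{t}^{1-2\sigma_{t}}$, exactly as in the paper.

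The gap is in your last paragraph, where you assert that convexity for $L(u_{j},\cdot)$ ``is comfortably sufficient''. The $L$-function attached to a Hecke--Maa\ss\ form on $\pslok\backslash\uphs$ has degree~$4$ over~$\rationals$, so the convexity bound on its critical line is $L(u_{j},\tfrac{1}{2}+it)\ll_{j}\abs{t}^{1+\epsilon}$. Multiplying by the gamma quotient $\abs{t}^{1-2\sigma_{t}}$ and the $\log$-powers coming from $\abs{\zeta_{K}(s(t))}^{-2}$ leaves you with $\abs{t}^{2-2\sigma_{t}+\epsilon}$, which is \emph{not} $o(1)$ in the case $\sigma_{\infty}=1$: under the hypothesis $(\sigma_{t}-1)\log t\to 0$ of Theorem~\ref{thm:que2} one has $\abs{t}^{2-2\sigma_{t}}\to 1$. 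Thus convexity sits exactly on the boundary and fails to give decay. The paper closes this by invoking the subconvex estimate of Petridis--Sarnak~\cite{petridis2001},
\[
    L(u_{j},\tfrac{1}{2}+it)\ll_{j}\abs{1+t}^{1-\delta}\qquad(\delta>0),
\]
which then yields $J_{j}(t)\ll_{j}\abs{t}^{-\delta}\log^{O(1)}\abs{t}\to 0$. Your argument \emph{would} go through unchanged when $\sigma_{\infty}>1$, but the lemma is stated (and used) for all $\sigma_{\infty}\geq 1$, so subconvexity is genuinely required.
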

\begin{proof}
    Denote the integral by
    \[J_{j}(s(t))=\int_{M}u_{j}(p)\abs{E(p,s(t))}^{2}\dmu.\]
    We define
    \[I_{j}(s)=\int_{M}u_{j}(p)E(p,s(t))E(p,s)\dmu.\]
    Unfolding the integral gives
    \[I_{j}(s)=\int_{0}^{\infty}\int_{F}u_{j}(p)E(p,s(t))y^{s}\dxy.\]
    After a change of variables, we may suppose that the $u_{j}$ in $I_{j}(s)$
    is even as the integral over the odd cusp forms vanishes.
    Substituting Fourier expansions of the Eisenstein series~\eqref{eq:efourier3} and the cusp forms~\eqref{eq:cfourier3}
    into the above integral gives
    \begin{multline*}
        I_{j}(s)=\int_{0}^{\infty}\int_{F}\biggl(2y\sum_{0\neq n\in\dualo}\rho_{j}(n)K_{ir_{j}}(2\pi\abs{n}y)\cos(2\pi\inprod{n}{z})\biggr)\\
        \times\biggl(y^{s(t)}+\varphi(s(t))y^{2-s(t)}\\
        +\frac{2y}{\xi_{K}(s(t))}\sum_{0\neq m\in\ringo}\abs{m}^{s(t)-1}\sigma_{1-s(t)}(m)K_{s(t)-1}\left(\frac{4\pi\abs{m} y}{\adisc}\right)e^{2\pi i\inprod{
                \frac{2\overline{m}}{\sqrt{\disc}}}{z}}\biggr)y^{s}\dxy.
    \end{multline*}
    By the definition of $F$ and the formula $\cos(a+b)=\cos a\cos b-\sin a\sin b$ it is simple to see that
    \[\int_{F}\cos(2\pi\inprod{n}{z})\d{z}=\begin{cases} 0, & \text{if } 0\neq n\in\dualo,\\ 1, & \text{if } n=0.\end{cases}\]
    Evaluating the integral over $F$ tells us that only the terms with $n=\pm 2m/\adisc$ remain and that the integral over the imaginary part goes to zero. Hence,
    with the identification $\ringo\rightarrow\dualo$ by $\alpha\mapsto(2/\sqrt{\disc})\overline{\alpha}$, we get
    \[I_{j}(s)=\frac{4}{\xi_{K}(s(t))}\int_{0}^{\infty}\sum_{0\neq n\in\dualo}\abs{n}^{s(t)-1}\sigma_{1-s(t)}(n)\rho_{j}(n)K_{s(t)-1}(2\pi\abs{n}y)K_{it_{j}}(2\pi\abs{n}y)y^{s}\frac{dy}{y}.\]
    The change of variables $y\mapsto y/\abs{n}$ yields
    \[I_{j}(s)=\frac{4}{\xi_{K}(s(t))}\sum_{0\neq n\in\dualo}\frac{\abs{n}^{s(t)-1}\sigma_{1-s(t)}(n)\rho_{j}(n)}{\abs{n}^{s}}\int_{0}^{\infty}K_{s(t)-1}(2\pi y)K_{it_{j}}(2\pi y)y^{s}\frac{dy}{y}.\]
    We can evaluate the integral by~\cite[6.576~(4)~and~9.100]{gradshteyn2007} to get
    \[I_{j}(s)= \frac{4}{\xi_{K}(s(t))}\frac{2^{-3}\pi^{-s}}{\Gamma(s)}\prod\Gamma\left(\frac{s\pm(s(t)-1)\pm it_{j}}{2}\right)R(s),\]
    where the product is taken over all combinations of $\pm$ and
    \[R(s)=\sum_{0\neq n\in\dualo}\frac{\abs{n}^{s(t)-1}\sigma_{1-s(t)}(n)\rho_{j}(n)}{\abs{n}^{s}}.\]
    Since $u_{j}$ is a Hecke eigenform, we can factorise $R(s)$ with~\eqref{eq:lfact} as
    \begin{align*}
        R(s) &= \rho_{j}(1)\prod_{(p):\text{prime ideal}}\sum_{k=0}^{\infty}\frac{\lambda_{j}(p^{k})\abs{p}^{k(s(t)-1)}\sigma_{1-s(t)}(p^{k})}{\abs{p}^{ks}}\\
        &=\rho_{j}(1)\prod_{(p)}\sum_{k=0}^{\infty}\frac{\lambda_{j}(p^{k})\abs{p}^{k(s(t)-1)}}{\abs{p}^{ks}}\frac{1-\abs{p}^{2(1-s(t))(k+1)}}{1-\abs{p}^{2(1-s(t))}},
    \end{align*}
    and thus
    \[R(s)=\rho_{j}(1)\prod_{(p)}\frac{1-\abs{p}^{-2s}}{1-\lambda_{j}(p)\abs{p}^{-(s-s(t)+1)}+\abs{p}^{-2(s-s(t)+1)}}
        \frac{1}{1-\lambda_{j}(p)\abs{p}^{-(s+s(t)-1)}+\abs{p}^{-2(s+s(t)-1)}}.\]
    We can identify the $L$-functions to get
    \[R(s)=\rho_{j}(1)\frac{L(u_{j},\frac{s-s(t)+1}{2})L(u_{j},\frac{s+s(t)-1}{2})}{\zeta_{K}(s)}.\]
    Now,
    \[J_{j}(t)=I_{j}(\overline{s(t)}),\]
    so that
    \begin{align*}
        J_{j}(t)&=\frac{2^{-1}}{\xi_{K}(s(t))}\frac{\pi^{-\overline{s(t)}}}{\Gamma(\overline{s(t)})}\prod\Gamma\left(\tfrac{\overline{s(t)}\pm(s(t)-1)\pm
                it_{j}}{2}\right)\rho_{j}(1)\frac{1}{\zeta_{K}(\overline{s(t)})}L(u_{j},\tfrac{1}{2}-it)L(u_{j},\sigma_{t}-\tfrac{1}{2})\\
        &=\frac{2^{s(t)-1}\pi^{2it}\rho_{j}(1)}{\abs{\disc}^{s(t)/2}\abs{\zeta_{K}(s(t))}^{2}}L(u_{j},\tfrac{1}{2}-it)L(u_{j},\sigma_{t}-\tfrac{1}{2})\frac{\prod\Gamma\left(\frac{\overline{s(t)}\pm(s(t)-1)\pm
                    it_{j}}{2}\right)}{\abs{\Gamma(s(t))}^{2}}.
    \end{align*}
    With Stirling asymptotics we see that the quotient of Gamma factors is $\bigo{\abs{t}^{1-2\sigma_{t}}}$.
    We use the estimate
    \begin{equation}\label{eq:zetaklog}
        \log^{-2}\abs{t}\ll \zeta_{K}(s(t))\ll\log^{2}\abs{t},
    \end{equation}
    which follows by adapting~\cite[(3.5.1)~and~Theorem~3.11]{titchmarsh1986} for $L(s,\chi)$ and the zero-free region~\cite{landau1924a}.
    For the $L$-functions we need a subconvex bound to guarantee vanishing.
    \textcite{petridis2001} show that there is a $\delta>0$ such that
    \[ L(u_{j},\tfrac{1}{2}+it)\ll_{j}\abs{1+t}^{1-\delta}.\]
    In fact, they have $\delta=7/166$, although this is not crucial for us.
    Hence, $J_{j}(t)\tendsto 0$, as $t\tendsto\infty$.
\end{proof}

\subsection{Continuous Part}
Let $h(y)\in C^{\infty}(\reals^{+})$ be a rapidly decreasing function at 0 and $\infty$ so that
$h(y)=\bigo[N]{y^{N}}$ for $0<y<1$ and $h(y)=\bigo[N]{y^{-N}}$ for $y\gg 1$ for all $N\in\naturals$. Denote the Mellin transform of $h$ by $H=\mellin h$, i.e.
\[H(s)=\int_{0}^{\infty}h(y)y^{-s}\frac{dy}{y}\]
and the Mellin inversion formula gives
\[h(y)=\frac{1}{2\pi i}\int_{(\sigma)}H(s)y^{s}\d{s},\]
for any $\sigma\in\reals$. We consider the incomplete Eisenstein series denoted by
\[F_{h}(p)=E(p|h)=\sum_{\gamma\in\gmodginf}h(y(\gamma p))=\frac{1}{2\pi i}\int_{(3)}H(s)E(p,s)\d{s},\]
where $h$ is a smooth function on $\reals^{+}$ with compact support.
We prove the following lemma.
\begin{lemma}\label{lemma:cont}
    Let $h$ be a function satisfying the conditions stated above. Then
    \begin{equation*}
        \int_{M}F_{h}(p)\abs{E(v,s(t))}^{2}\dmu\sim
        \begin{cases}
            \int_{M}F_{h}(p)E(p,2\sigma_{\infty})\dmu, & \text{if $\sigma_{\infty}>1$,}\\
            \frac{2(2\pi)^{2}}{\units\abs{\disc}\zeta_{K}(2)}\log t \int_{M}F_{h}(p)\dmu, & \text{if $(\sigma_{t}-1)\log t\tendsto 0,$}
        \end{cases}
    \end{equation*}
    as $t\tendsto\infty$.
\end{lemma}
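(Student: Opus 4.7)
The plan is a Luo--Sarnak-type contour argument: use Mellin inversion to reduce the problem to analysis of the Rankin--Selberg integral
\[
R(s) := \int_M E(p,s)\abs{E(p,s(t))}^{2}\dmu,
\]
regarded as a meromorphic function of $s$, and then shift the contour past its relevant poles. First I would write $F_{h}(p) = \frac{1}{2\pi i}\int_{(3)} H(s) E(p,s)\,ds$ and interchange the order of integration, so that
\[
\int_{M} F_{h}(p)\abs{E(p,s(t))}^{2}\dmu = \frac{1}{2\pi i}\int_{(3)} H(s) R(s)\, ds.
\]
Unfolding gives $R(s) = \int_{0}^{\infty}\int_{F} \abs{E(p,s(t))}^{2} y^{s-3}\,dx_{1}\,dx_{2}\,dy$, and inserting the Fourier expansion~\eqref{eq:efourier3} collapses the integral over $F$ to diagonal Fourier coefficients.

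The zero-frequency diagonal gives four formally divergent $y$-integrals which become meromorphic after Rankin--Selberg regularisation (the three-dimensional analogue of Zagier's computation, as in~\cite{koyama2000}), producing simple poles at $s = 2\sigma_{t}$, $s = 4 - 2\sigma_{t}$, and $s = 2 \pm 2it$. The nonzero-frequency diagonal, evaluated using the Mellin transform of a product of $K$-Bessel functions~\cite[6.576]{gradshteyn2007} together with Hecke multiplicativity of $\sigma_{1-s(t)}$ (exactly as in Lemma~\ref{lemma:disc}), assembles into a ratio of $\xi_{K}$-factors which is holomorphic to the right of $\re s = 1$ apart from the pole of $E(p,s)$ at $s=2$. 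I would then shift the contour from $(3)$ to $(1+\eta)$ with $\eta>0$ small, picking up these residues. The residue at $s=2$ contributes a $t$-bounded constant multiple of $H(2)$; the residues at $s = 2\pm 2it$ oscillate and are killed by the rapid vertical decay of $H$. The decisive contributions come from $s = 2\sigma_{t}$ and $s = 4-2\sigma_{t}$. In the case $\sigma_{\infty}>1$, these two residues are bounded holomorphically in $t$, and after reassembly via Mellin inversion they identify as $\int_{M} F_{h}(p) E(p, 2\sigma_{\infty})\dmu$. In the case $\sigma_{\infty}=1$, the two poles collide at $s=2$ as $\sigma_{t}\to 1$, and the confluent double-pole residue, combined with the Laurent expansion of $\zeta_{K}$ at $s=1$ (via~\eqref{eq:classnumber}) and the bounds~\eqref{eq:zetaklog} on $\abs{\zeta_{K}(s(t))}^{-2}$, produces the $\log t$ main term. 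The hypothesis $(\sigma_{t}-1)\log t \to 0$ is exactly what is needed for the two simple poles to merge cleanly into a single double pole, with the linear separation term killed in the limit.

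The shifted contour integral on $\re s = 1+\eta$ is $\lilo{1}$ in the first case and $\lilo{\log t}$ in the second, which follows from Stirling bounds on the Gamma factors in $R(s)$, the rapid vertical decay of $H$, and~\eqref{eq:zetaklog}. The main obstacle will be the confluent case $\sigma_{\infty}=1$: correctly pinning down the explicit constant $\frac{2(2\pi)^{2}}{\units\abs{\disc}\zeta_{K}(2)}$ demands careful bookkeeping of the Laurent coefficients of all four $\xi_{K}$-factors appearing in the numerator of $R(s)$ near $s=2$, together with the $\abs{\xi_{K}(s(t))}^{-2}$ denominator and the residue~\eqref{eq:classnumber}. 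As Remark~\ref{rem:mistake} records, this is precisely the place where numerical errors have slipped through before, so extra bookkeeping care is warranted.
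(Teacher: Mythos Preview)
Your overall strategy is the same as the paper's: unfold, insert the Fourier expansion~\eqref{eq:efourier3}, identify the Dirichlet series $\sum_{n}\abs{\sigma_{1-s(t)}(n)}^{2}\abs{n}^{-s}$ as a ratio of $\xi_{K}$-functions, shift the contour and collect residues. Two points, however, need correction.

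First, your bookkeeping of poles is inverted. The nonzero-frequency part is \emph{not} holomorphic to the right of $\re s=1$ apart from $s=2$; on the contrary, the factorisation of the Dirichlet series yields
\[
\frac{\xi_{K}(\tfrac{s}{2}-\sigma_{t}+1)\,\xi_{K}(\tfrac{s}{2}+it)\,\xi_{K}(\tfrac{s}{2}-it)\,\xi_{K}(\tfrac{s}{2}+\sigma_{t}-1)}{\xi_{K}(s)},
\]
whose numerator has simple poles at $s=2\sigma_{t}$, $s=4-2\sigma_{t}$, $s=2\pm 2it$ (and $s=2\sigma_{t}-2$), while there is \emph{no} pole at $s=2$. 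The residue you describe at $s=2$ would formally equal $\res_{s=2}E(p,s)\cdot\int_{M}\abs{E(p,s(t))}^{2}\dmu$, which diverges, so it cannot be ``a $t$-bounded constant multiple of $H(2)$''. In the paper's organisation the constant-term contribution is handled directly (giving $H(2-2\sigma_{t})$, etc.) rather than through regularised poles, which avoids this confusion.

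Second, and more seriously, the remaining integral on the shifted line is \emph{not} controlled by the inputs you list. Stirling for the Gamma factors, rapid decay of $H$, and the bound~\eqref{eq:zetaklog} together yield only the convexity estimate, which gives a contribution of size $\abs{t}^{2-2\sigma_{t}+\epsilon}$; under the hypothesis $(\sigma_{t}-1)\log t\to 0$ this is $\abs{t}^{\epsilon}$, not $\lilo{\log t}$. The paper shifts all the way to $\re s=1$ and invokes the subconvex bound $\zeta_{K}(\tfrac{1}{2}+it)\ll t^{1/3+\epsilon}$ of Heath-Brown, which brings the error down to $\abs{t}^{5/3-2\sigma_{t}+\epsilon}=\lilo{1}$. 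Some subconvexity input of this type is essential and is missing from your plan.

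Finally, a remark on the $\sigma_{\infty}=1$ case: your confluent double-pole heuristic is morally correct, but the paper makes it rigorous differently. Since $\sigma_{t}>1$ strictly, the two poles never actually merge; instead the paper isolates the combination $\dfrac{1-\abs{\varphi(s(t))}^{2}}{\sigma_{t}-1}$ and evaluates it by the mean value theorem together with the asymptotic $\dfrac{\varphi'}{\varphi}(\sigma\pm it)\sim -4\log t$, the latter coming from $\Gamma'/\Gamma(\sigma+it)=\log\abs{t}+\bigo{1}$ and a Weyl-type bound on $\zeta_{K}'/\zeta_{K}$. The Laurent expansion of $\zeta_{K}$ at $s=1$ supplies the constant, but the $\log t$ itself comes from the $t$-aspect of the Gamma factor, not from the $s$-pole structure.
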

Now, unfolding gives
\begin{align*}
    \int_{M}F_{h}(p)\abs{E(p,s(t))}^{2}\dmu &= \int_{M}\frac{1}{2\pi i}\int_{(3)}H(s)E(p,s)\d{s}\,\abs{E(p,s(t))}^{2}\dmu\\
    &= \int_{0}^{\infty}\frac{1}{2\pi i}\int_{(3)}H(s)y^{s}\d{s}\,\int_{F}\abs{E(p,s(t))}^{2}\dmu\\
    &=\int_{0}^{\infty} h(y)\volf\left(\sum_{n\in\ringo}\abs{a_{n}(y,s(t))}^{2}\right)\frac{dy}{y^{3}}.
\end{align*}
We will deal separately with the contribution of the $n=0$ term and the rest. We factor out the constant $\volf$ in the analysis below.

\subsubsection{Contribution of the constant term}
We know that
\[\abs{a_{0}(y,s(t))}^{2} = y^{2\sigma_{t}}+2\re(\varphi(s(t))y^{2-2it})+\abs{\varphi(s(t))}^{2}y^{4-2\sigma_{t}}.\]
The first term is
\[\int_{0}^{\infty}h(y)y^{2\sigma_{t}-2}\frac{dy}{y}=H(2-2\sigma_{t}),\]
which converges to $H(2-2\sigma_{\infty})$. For the second term we first have that
\[\varphi(s(t))\int_{0}^{\infty}h(y)y^{-2it}\frac{dy}{y}=\varphi(s(t))H(2it).\]
Since $H(s)$ is in Schwartz class in $t$, the function $H(2it)$ decays rapidly, whereas $\varphi(s(t))$ is bounded.
By taking complex conjugates we see that the second term will also tend to zero.
Finally, for the third expression in the constant term we get
\[\abs{\varphi(s(t))}^{2}\int_{0}^{\infty}h(y)y^{2-2\sigma_{t}}\frac{dy}{y}=\abs{\varphi(s(t))}^{2}H(2\sigma_{t}-2).\]
If $\sigma_{\infty}\neq 1$ then
\[\abs{\varphi(s(t))}=\left\lvert\frac{2\pi}{s(t)-1}\frac{\zeta_{K}(s(t)-1)}{\zeta_{K}(s(t))}.\right\rvert\]
To estimate this we need the convexity bound for $\zeta_{K}$,
\[\zeta_{K}(s(t)-1)=\zeta_{K}(\sigma_{t}-1+it)=\bigo{\abs{t}^{1-\sigma_{t}/2+\epsilon}},\]
and of course
\[\frac{1}{s(t)-1}=\bigo{\abs{t}^{-1}}.\]
Combining all of this with~\eqref{eq:zetaklog} we get
\[\varphi(s(t))=\bigo{\abs{t}^{-\sigma_{t}/2+\epsilon}},\]
and so
\begin{equation}\label{eq:phizero}
    \varphi(s(t))\tendsto 0,
\end{equation}
as $t\tendsto\infty$, when $\sigma_{\infty}\neq 1$. So in summary, the contribution of the constant term converges to $H(2-2\sigma_{\infty})$ if $\sigma_{\infty}\neq 1$ and is $\bigo{1}$ otherwise.

\subsubsection{Contribution of the non-constant terms}
In this case the contribution equals
\begin{align*}
    A(t) &=\int_{0}^{\infty}\frac{1}{2\pi i}\int_{(3)}H(s)y^{s}\d{s}\frac{4y^{2}}{\abs{\xi_{K}(s(t))}^{2}}\sideset{}{'}\sum_{n\in\ringo}\abs{n}^{2\sigma_{t}-2}\abs{\sigma_{1-s(t)}(n)}^{2}\abs*{K_{s(t)-1}\left(\tfrac{4\pi\abs{n}y}{\adisc}\right)}^{2}\frac{dy}{y^{3}}\\
    &=\frac{4\units}{\abs{\xi_{K}(s(t))}^{2}}\frac{1}{2\pi
        i}\int_{(3)}H(s)\left(\tfrac{\adisc}{4\pi}\right)^{s}\sideset{}{'}\sum_{n\in\ringo\modsum}\frac{\abs{\sigma_{1-s(t)}(n)}^{2}}{\abs{n}^{s+2-2\sigma_{t}}}\int_{0}^{\infty}y^{s}\abs{K_{s(t)-1}(y)}^{2}\,\frac{dy}{y}\d{s},
\end{align*}
where $a\sim b$ if $a$ and $b$ generate the same ideal in $\ringo$ and prime in the summation denotes that it is taken over $n\neq0$.
We now need to evaluate the series. Keeping in mind that $N(p)=\abs{p}^{2}$, we get by a standard calculation
\begin{align*}
    \sideset{}{'}\sum_{n\in\ringo\modsum}\frac{\sigma_{a}(n)\sigma_{b}(n)}{\abs{n}^{s}}
    &=\prod_{(p):\text{prime ideal}}\sum_{k=0}^{\infty}\frac{\sigma_{a}(p^{k})\sigma_{b}(p^{k})}{\abs{p}^{ks}}\\
    &=\prod_{(p)}\frac{1-\abs{p}^{2(a+b-s)}}{(1-\abs{p}^{-s})(1-\abs{p}^{2a-s})(1-\abs{p}^{2b-s})(1-\abs{p}^{2a+2b-s})},
\end{align*}
and hence
\[\sideset{}{'}\sum_{n\in\ringo\modsum}\frac{\sigma_{a}(n)\sigma_{b}(n)}{\abs{n}^{s}}=\frac{\zeta_{K}(\frac{s}{2})\zeta_{K}(\frac{s}{2}-a)\zeta_{K}(\frac{s}{2}-b)\zeta_{K}(\frac{s}{2}-a-b)}{\zeta_{K}(s-a-b)}.\]
For $a=\overline{b}=1-s(t)$ and $s=s-2(\sigma_{t}-1)$ this becomes
\[\sideset{}{'}\sum_{n\in\ringo\modsum}\frac{\abs{\sigma_{1-s(t)}(n)}^{2}}{\abs{n}^{s-2\sigma_{t}+2}} =
    \frac{\zeta_{K}(\frac{s}{2}-\sigma_{t}+1)\zeta_{K}(\frac{s}{2}+it)\zeta_{K}(\frac{s}{2}-it)\zeta_{K}(\frac{s}{2}+\sigma_{t}-1)}{\zeta_{K}(s)}.\]
Again, by~\cite[6.576~(4)]{gradshteyn2007} we see that
\[\int_{0}^{\infty}y^{s}\abs{K_{s(t)-1}(y)}^{2}\frac{dy}{y}=\frac{2^{s-3}}{\Gamma(s)}\Gamma(\tfrac{s}{2}-\sigma_{t}+1)\Gamma(\tfrac{s}{2}+it)\Gamma(\tfrac{s}{2}-it)\Gamma(\tfrac{s}{2}+\sigma_{t}-1).\]
Hence, $A(t)$ becomes
\begin{align*}
    A(t)&=\frac{\units}{\abs{\xi_{K}(s(t))}^{2}}\frac{1}{4\pi
        i}\int_{(3)}H(s)\frac{\xi_{K}(\frac{s}{2}-\sigma_{t}+1)\xi_{K}(\frac{s}{2}-it)\xi_{K}(\frac{s}{2}+it)\xi_{K}(\frac{s}{2}+\sigma_{t}-1)}{\xi_{K}(s)}\d{s}\\
    &=\frac{\units}{\abs{\xi_{K}(s(t))}^{2}}\frac{1}{4\pi i}\int_{(3)}B(s)\d{s},
\end{align*}
say.
By the Dirichlet Class Number Formula~\eqref{eq:classnumber} for $\zeta_{K}$, the completed zeta function $\xi_{K}$ has a simple pole at $s=1$ with
\[\res_{s=1}\xi_{K}(s)=\frac{1}{\units}.\]
There is also a simple pole at $s=0$.
It follows that the poles of $B(s)$ in the region
$\re s\geq 1$ are at $2\pm 2it$, $2\sigma_{t}$, $2\sigma_{t}-2$, and $4-2\sigma_{t}$.
Moving the line of integration to $\re s=1$ gives
\begin{align*}
    A(t)&=\frac{\units}{2\abs{\xi_{K}(s(t))}^{2}}\Biggl(\res_{s=2\pm
        2it}B(s)+\res_{s=2\sigma_{t}}B(s)+\delta_{t}\res_{s=4-2\sigma_{t}}B(s)\\
    &\phantom{=}+(1-\delta_{t})\res_{s=2\sigma_{t}-2}B(s)+\frac{1}{2\pi i}\int_{(1)}B(s)\d{s}\Biggr),\\
    &= A_{1}+A_{2}+\cdots+A_{5},
\end{align*}
where $\delta_{t}=1$ if $\sigma_{t}<3/2$ and 0 otherwise. We deal with each of the residues $A_{i}$ separately.
    For the first term we have
        \[A_{1}=\frac{H(2\pm 2it)}{\abs{\xi_{K}(\sigma_{t}+it)}^{2}}\frac{\xi_{K}(2-\sigma_{t}\pm it)\xi_{K}(1\pm 2it)\xi_{K}(\sigma_{t}\pm it)}{\xi_{K}(2\pm 2it)}.\]
        By Stirling asymptotics and convexity estimates for the Dedekind zeta functions, the quotient of the $\xi_{K}$ functions
        is bounded by $\abs{t}^{1-2\sigma_{t}}\log^{10}\abs{t}$. By virtue of $H$ being of rapid decay in $t$
        it follows that $A_{1}\tendsto 0$ as $t\tendsto\infty$.

    The second term is
        \[A_{2}=H(2\sigma_{t})\frac{\xi_{K}(2\sigma_{t}-1)}{\xi_{K}(2\sigma_{t})}.\]
        If $\sigma_{\infty}\neq 1$ then
        \[A_{2}\tendsto H(2\sigma_{\infty})\frac{\xi_{K}(2\sigma_{\infty}-1)}{\xi_{K}(2\sigma_{\infty})},\]
        but if $\sigma_{t}\tendsto 1$ then
        \[ A_{2}\sim H(2)\frac{1}{2\units\xi_{K}(2)(\sigma_{t}-1)}.\]

    Now, in the third term we use the form~\eqref{eq:explicitk} of $\varphi$ and the fact that
        $\xi_{K}$ satisfies the functional equation~\eqref{eq:funceqk}
        \[\xi_{K}(s)=\xi_{K}(1-s).\]
        We can then write
        \[A_{3}=\delta_{t}H(4-2\sigma_{t})\abs{\varphi(s(t))}^{2}\frac{\xi_{K}(3-2\sigma_{t})}{\xi_{K}(4-2\sigma_{t})}.\]
        By~\eqref{eq:phizero} we have that $\varphi(s(t))\tendsto 0$ as $t\tendsto\infty$ for $\sigma_{\infty}\neq 1$. Hence, if $\sigma_{\infty}\neq 1$, then
        \[A_{3}\tendsto 0.\]
        On the other hand, if $\sigma_{\infty}=1$, then
        \[A_{3}\sim\frac{\delta_{t}}{2\units}H(2)\abs{\varphi(s(t))}^{2}\frac{-1}{\xi_{K}(2)(\sigma_{t}-1)},\]
        which is bounded.

    For the fourth term we have
        \[A_{4}=(1-\delta_{t})\zeta_{K}(0)H(2\sigma_{t}-2)\abs{\varphi(s(t))}^{2},\]
        which clearly converges to 0 if $\sigma_{\infty}\neq 1$ and is bounded for $\sigma_{\infty}=1$ as
        in the previous case.

    Finally, the fifth term is
        \begin{align*}
            A_{5} &=\frac{\units}{2\abs{\xi_{K}(s(t))}^{2}}\frac{1}{2\pi i}\int_{(1)}B(s)\d{s}=\frac{\units}{2\abs{\xi_{K}(\sigma_{t}+it)}^{2}}\inti,
        \end{align*}
        where
        \[\inti=\frac{1}{2\pi}\int_{-\infty}^{\infty}H(1+i\tau)\frac{\abs{\xi_{K}(\sigma_{t}-\frac{1}{2}+i\tau)}^{2}\xi_{K}(\frac{1}{2}+i(\tau+t))\xi_{K}(\frac{1}{2}+i(\tau-t))}{\xi_{K}(1+2i\tau)}\d{\tau}.\]
        We now estimate the growth of $A_{5}$ in terms of $t$.
        The exponential contribution from the gamma functions in the integral is equal to
        \[(e^{-\frac{\pi}{2}\abs{t}})^{2}e^{-\frac{\pi}{2}\abs{\tau+t}}e^{-\frac{\pi}{2}\abs{\tau-t}}e^{\frac{\pi}{2}\abs{2\tau}}\ll e^{-\pi\abs{t}}.\]
        This cancels with the exponential growth of $\abs{\xi_{K}(s(t))}^{2}$.
        Since $H(1+i\tau)$ decays rapidly, we can bound $\zeta_{K}(\sigma_{t}-\frac{1}{2}+i\tau)$ polynomially and absorb it
        into $H$. Hence
        \begin{align*}
            A_{5} &\ll
            \frac{\log^{4}\abs{t}}{(\abs{t}^{\sigma_{t}-1/2})^{2}}\int_{-\infty}^{\infty}\widetilde{H}(\tau)\abs{\zeta_{K}(\tfrac{1}{2}+i(\tau+t))\zeta_{K}(\tfrac{1}{2}+i(\tau-t))}\d{\tau}.
        \end{align*}
        where $\widetilde{H}$ is a function of rapid decay. The Dedekind zeta functions can be estimated with the subconvex bound
        \[\zeta_{K}(\tfrac{1}{2}+it)\ll t^{1/3+\epsilon}\]
        due to~\textcite{heath-brown1988} (he proves a more general bound for arbitrary number fields of degree $n$).
        We get
        \begin{equation*}
            A_{5}\ll \abs{t}^{5/3-2\sigma_{t}+2\epsilon}\log^{4}\abs{t}\int_{-\infty}^{\infty}\widetilde{H}(\tau)
            (t^{-1}+\abs{\tau t^{-1}+1})^{1/3+\epsilon}(t^{-1}+\abs{\tau t^{-1}-1})^{1/3+\epsilon}\d{\tau},
        \end{equation*}
        which is $\lilo{1}$ since $\sigma_{t}\geq 1$.

Hence we have proved that the integral
\[\int_{M}F_{h}(p)\abs{E(p,s(t))}^{2}\dmu\]
converges to
\[\volf\left(H(2-2\sigma_{\infty})+H(2\sigma_{\infty})\frac{\xi_{K}(2\sigma_{\infty}-1)}{\xi_{K}(2\sigma_{\infty})}\right),\]
if $\sigma_{\infty}\neq 1$. On the other hand, for $\sigma_{\infty}=1$ the contribution is asymptotic to
\begin{equation}\label{eq:finalestimate2}
    \volf H(2)\frac{1-\abs{\varphi(s(t))}^{2}}{2\units\xi_{K}(2)(\sigma_{t}-1)}+O(1).
\end{equation}
To finish the proof, we apply Mellin inversion and unfold backwards to see that
\begin{align*}
    \volf \biggl(H(2-2\sigma_{\infty})+&H(2\sigma_{\infty})\frac{\xi_{K}(2\sigma_{\infty}-1)}{\xi_{K}(2\sigma_{\infty})}\biggr)\\
    &=\int_{0}^{\infty}h(y)\volf(y^{2\sigma_{\infty}-2+2}+\varphi(2\sigma_{\infty})y^{-2\sigma_{\infty}+2})\frac{dy}{y^{3}}\\
    &=\int_{0}^{\infty}h(y)\left(\int_{F}E(z+jy,2\sigma_{\infty})\d{z}\right)\frac{dy}{y^{3}}\\
    &=\int_{M}F_{h}(p)E(p,2\sigma_{\infty})\dmu,
\end{align*}
and
\[\volf H(2)=\int_{M}F_{h}(p)\dmu.\]
For the second case, we need to estimate the quotient with the scattering matrix. We will show that
\begin{align*}
    \frac{1-\abs{\varphi(s(t))}^{2}}{2\units\xi_{K}(2)(\sigma_{t}-1)}&\sim\frac{2(2\pi)^{2}}{\units\abs{\disc}\zeta_{K}(2)}\log t.
\end{align*}
Let $G(\sigma)=\varphi(\sigma+it)\varphi(\sigma-it)$ and notice that
\[G'(\sigma)=\frac{\varphi'}{\varphi}(\sigma\pm it)G(\sigma),\]
where the $\pm$ denotes the linear combination
$\frac{\varphi'}{\varphi}(\sigma\pm it)=\frac{\varphi'}{\varphi}(\sigma+it)+\frac{\varphi'}{\varphi}(\sigma-it)$.
We then apply the mean value theorem twice on the intervals $[1,\sigma]$ and $[1,\sigma']$, respectively. We get
\begin{align*}
    \frac{G(1)-G(\sigma)}{1-\sigma}
    &=\left(G(1)-(1-\sigma')G(\sigma'')\frac{\varphi'}{\varphi}(\sigma''\pm it)\right)\frac{\varphi'}{\varphi}(\sigma'\pm it),
\end{align*}
where $1\leq\sigma''\leq\sigma'\leq\sigma$.
On noticing that $G(1)=1$, this gives
\[\frac{1-\abs{\varphi(\sigma+it)}^{2}}{1-\sigma}=\left(1-(1-\sigma')\abs{\varphi(\sigma''+it)}^{2}\frac{\varphi'}{\varphi}(\sigma''\pm it)\right)\frac{\varphi'}{\varphi}(\sigma'\pm it).\]
Using the asymptotics
\begin{equation}\label{eq:phiasymp}
    \frac{\varphi'}{\varphi}(\sigma\pm it)\sim-4\log t,
\end{equation}
and the fact that $\abs{\varphi(\sigma+it)}$ is bounded for $\sigma\geq 1$ proves the lemma.
The estimate~\eqref{eq:phiasymp} follows immediately from the standard asymptotics for the digamma function,
\[\frac{\Gamma'}{\Gamma}(\sigma+it) = \log\abs{t} + \bigo{1},\]
and the Weyl bound
\[\frac{\zeta_{K}'}{\zeta_{K}}(\sigma+it)\ll\frac{\log t}{\log\log t},\]
for $\zeta_{K}'/\zeta_{K}$ (see~\cite[Theorems~3.11~and~5.17]{titchmarsh1986} and~\cite{coleman1990}). \hfill$\qed$
\begin{proof}[Proofs of Theorems~\ref{thm:que2}~and~\ref{thm:que3}]
    These follow now from Lemmas~\ref{lemma:disc}~and~\ref{lemma:cont} by approximation arguments similar to~\cite{luo1995} and~\cite{koyama2000}.
\end{proof}
Theorem~\ref{thm:que1} now follows easily.
\begin{proof}[Proof of Theorem~\ref{thm:que1}]
    By the functional equation~\eqref{eq:efunceq3} of $E(p,s)$, we get
    \begin{align*}
        \abs{\upsilon_{\rho_{n}}}^{2}\dmu &= \abs{(\res_{s=\rho_{n}}\varphi(s))^{-1}\res_{s=\rho_{n}}E(p,s)}^{2}\dmu\\
        &= \abs{(\res_{s=\rho_{n}}\varphi(s))^{-1}\res_{s=\rho_{n}}\varphi(s)E(p,2-s)}^{2}\dmu\\
        &=\abs{E(p,2-\rho_{n})}^{2}\dmu.
    \end{align*}
    We apply Theorem~\ref{thm:que3} with $\sigma_{\infty}=2-\gamma_{\infty}$ to conclude the proof.
\end{proof}

\printbibliography

\end{document}